\newtheorem{theorem}{Theorem}[section]
\newtheorem{proposition}{Proposition}
\theoremstyle{definition}
\newtheorem{definition}[theorem]{Definition}
\newcommand{\udm}{u_{d,m}}
\newcommand{\dep}{d_{\varepsilon}}
\newcommand{\mep}{m_{\varepsilon}}
\newcommand{\uep}{u_\varepsilon}
\newcommand{\vep}{v_\varepsilon}
\newcommand{\udepmep}{u_{d_\varepsilon, m_\varepsilon}}
\newcommand{\R}{\mathbb{R}}
\newcommand{\e}{\varepsilon}
\newcommand{\sfrac}[2]{\text{\small $\dfrac{#1}{#2}$}}
\title[RATIO OF SPECIES AND RESOURCES] 
{On the unboundedness of the ratio of species and resources for the diffusive logistic equation}
\author[Jumpei Inoue and Kousuke Kuto]{}
\subjclass{Primary: 35Q92, 35B30; Secondary: 35B09, 35B40}
\keywords{diffusive logistic equation, elliptic equations, the sub-super solution method, radial solutions, the mathematical ecology.}
\email{j-inoue@uec.ac.jp}
\email{kuto@waseda.jp}
\thanks{The second author is supported by JSPS KAKENHI
Grant-in-Aid Grant Number 19K03581}
\thanks{$^*$ Corresponding author: Jumpei Inoue}
\begin{document}
\maketitle

\centerline{\scshape Jumpei Inoue$^*$}
\medskip
{\footnotesize
 \centerline{Department of Graduate School of Informatics and Engineering}
   \centerline{The University of Electro-Communications}
   \centerline{1-5-1 Chofugaoka, Chofu, Tokyo, 182-8585, Japan}
} 

\medskip

\centerline{\scshape Kousuke Kuto}
\medskip
{\footnotesize
 \centerline{Department of Applied Mathematics}
   \centerline{Waseda University}
   \centerline{3-4-1 Ohkubo, Shinjuku-ku, Tokyo, 164-8555, Japan}
}

\bigskip

\centerline{(Communicated by the associate editor name)}

\begin{abstract}

Concerning a class of diffusive logistic equations, Ni \cite[Abstract]{BaiHeLi} proposed an optimization problem to consider the supremum of the ratio of the $L^1$ norms of species and resources by varying the diffusion rates and the profiles of resources, and moreover, he gave a conjecture that the supremum is $3$ in the one-dimensional case.
In \cite{BaiHeLi}, Bai, He and Li proved the validity of this conjecture.
The present paper shows that the supremum is infinity in a case when the habitat is a multi-dimensional ball.
Our proof is based on the sub-super solution method.
A key idea of the proof is to construct an $L^1$ unbounded sequence of sub-solutions.

\end{abstract}

\section{Introduction}

This paper is concerned with the following stationary problem for a diffusive logistic equation
\begin{equation}\label{main_eq}
  \begin{cases}
    d\, \Delta u + u(m(x)-u)=0 &\text{in}\;\;\Omega, \\
    \; \partial_\nu u = 0 &\text{on}\;\;\partial\Omega,
  \end{cases}
\end{equation}
where $\Omega \subseteq \R^n$ is a bounded domain with a smooth boundary $\partial\Omega$; $d$ is a positive constant; $m(x)$ is a measurable function belonging to
\[ L_+^\infty (\Omega) := \{\, f \in L^\infty(\Omega) \mid f(x) \ge 0 \;\text{a.e.}\; x \in \Omega,\; \|f\|_{L^\infty} > 0 \,\}. \]
From the viewpoint of an ecological model, \eqref{main_eq} is expected to realize the stationary distribution of species in the habitat $\Omega$.
In this sense, the unknown function $u(x)$ represents the distribution of species and $m(x)$ can be interpreted as the distribution of resources (feed).
The boundary condition assumes that there is no flux of species on the boundary $\partial\Omega$ of the habitat.
In the field of reaction-diffusion equations, the existence, uniqueness and stability of positive solutions are obtained by Cantrell and Cosner \cite{CantrellCosner}.
Besides \cite{CantrellCosner},
series of works by them Cantrell and Cosner \cite{CantrellCosner2, CantrellCosner3, CantrellCosner4},
Taira \cite{Taira1, Taira2} gave a great contribution to the research field for a
class of diffusive logistic equations with spatial heterogeneous terms.
\begin{proposition}[\cite{CantrellCosner}]\label{prop1}
  For each $d > 0$ and each $m \in L_+^\infty(\Omega)$, \eqref{main_eq} has a unique positive solution $\udm(x)$ in the class of $W^{2,p}(\Omega)$ for any $p \ge 1$. Furthermore, $\udm(x)$ is globally asymptotically stable (GAS) in the sense that it attracts all positive solutions of the corresponding parabolic problem as $t \to \infty$.
\end{proposition}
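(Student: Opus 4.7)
The plan is to follow the classical sub- and super-solution method for the Neumann problem \eqref{main_eq}.

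\textbf{Existence.} I would first exhibit an ordered pair of sub- and super-solutions. The constant $\overline{u}(x) \equiv \|m\|_{L^\infty(\Omega)}$ serves as a super-solution because $d\Delta \overline{u} + \overline{u}(m(x) - \overline{u}) = \|m\|_\infty(m(x) - \|m\|_\infty) \leq 0$ a.e., and $\partial_\nu \overline{u} \equiv 0$. For a sub-solution I would take the principal eigenpair $(\mu_1, \phi_1)$ with $\phi_1 > 0$ of the weighted Neumann problem $-d\Delta \phi - m(x)\phi = \mu \phi$; testing against the constant $1$ yields $\mu_1 \leq -\|m\|_{L^1(\Omega)}/|\Omega| < 0$ since $m$ is nonnegative and nontrivial. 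Setting $\underline{u} = \varepsilon \phi_1$, one computes $d\Delta \underline{u} + \underline{u}(m - \underline{u}) = -\varepsilon \mu_1 \phi_1 - \varepsilon^2 \phi_1^2 \geq 0$ for all sufficiently small $\varepsilon > 0$, with $\underline{u} \leq \overline{u}$ after further shrinking $\varepsilon$. A monotone iteration between $\underline{u}$ and $\overline{u}$ then produces a classical positive solution $\udm$; since $m, \udm \in L^\infty(\Omega)$, standard elliptic $L^p$ theory (Agmon--Douglis--Nirenberg) upgrades the regularity to $W^{2,p}(\Omega)$ for every $p \geq 1$.

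\textbf{Uniqueness.} Let $u_1, u_2$ be two positive solutions and set $w := u_1 - u_2$, so that $d\Delta w + (m - u_1 - u_2)w = 0$ with Neumann boundary data. For each $i$, the identity $-d\Delta u_i - (m - u_i)u_i = 0$ together with $u_i > 0$ and the Krein--Rutman characterization of the principal Neumann eigenvalue yield $\lambda_1\bigl(-d\Delta - (m - u_i)\bigr) = 0$. Strict monotonicity of $\lambda_1$ in the zeroth-order coefficient, applied to the pointwise strict inequality $m - u_1 - u_2 < m - u_1$, then gives $\lambda_1\bigl(-d\Delta - (m - u_1 - u_2)\bigr) > 0$, which forces $w \equiv 0$.

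\textbf{Global asymptotic stability.} For any positive initial datum $u_0$, I would choose $\delta > 0$ small and $M > 0$ large with $\delta \phi_1 \leq u_0 \leq M$, so that these endpoints remain a sub- and super-solution of the stationary problem. The parabolic comparison principle then sandwiches the evolution between two monotone trajectories issued from $\delta \phi_1$ (nondecreasing in $t$) and $M$ (nonincreasing), each of which converges in $C^1(\overline{\Omega})$ to a positive equilibrium; the uniqueness just established identifies this common limit as $\udm$.

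The main obstacle is the uniqueness step: the monotonicity of $\lambda_1$ must be applied carefully since the zeroth-order coefficients $m - u_i$ lie only in $L^\infty$, so a variational characterization of $\lambda_1$ valid in this regularity is needed, and one must verify that strict inequality of potentials on a positive-measure set produces a strict inequality of eigenvalues rather than merely a non-strict one.
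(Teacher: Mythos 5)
The paper gives no proof of this proposition: it is quoted verbatim from Cantrell and Cosner \cite{CantrellCosner}, so there is no internal argument to compare yours against. Judged on its own, your proof is the standard route (and essentially the one in the cited reference): the ordered pair $\varepsilon\phi_1\le\|m\|_{L^\infty}$ of sub- and super-solutions, monotone iteration, uniqueness via strict monotonicity of the principal eigenvalue in the potential, and parabolic squeezing for global attractivity. The computations check out: $\mu_1<0$ follows from testing the Rayleigh quotient with a constant, the difference $w=u_1-u_2$ does satisfy $d\Delta w+(m-u_1-u_2)w=0$, and the worry you flag about the uniqueness step is resolved by the variational characterization of $\lambda_1$, which is valid for $L^\infty$ potentials and yields \emph{strict} monotonicity because the principal eigenfunction is positive a.e., so $\int(V_2-V_1)\phi^2>0$ when $V_1<V_2$ on a set of positive measure. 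Two points should be tightened. First, since $m$ is only in $L^\infty$, the iteration produces a strong $W^{2,p}$ solution (hence $C^{1,\alpha}$ by Sobolev embedding), not a classical one; the word ``classical'' overstates the regularity, and the iteration itself must be run in the $W^{2,p}$ framework with the linearized operators understood in the strong sense. Second, in the stability step an arbitrary positive initial datum need not dominate $\delta\phi_1$ pointwise (it may vanish somewhere or fail to be bounded below); one should first evolve for a short time $t_0>0$ and invoke the strong maximum principle to obtain a uniform positive lower bound on $\overline{\Omega}$ before sandwiching between the two monotone trajectories. With these repairs the argument is complete.
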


In the sense of Proposition \ref{prop1}, one can say that, in order to know or design the final state of the distribution of species, it is important to derive mathematical effects of the diffusion rate $d$ and the distribution $m(x)$ of resources on the profile of $\udm(x)$.
As a trigger to know such effects, the following mathematical procedure for \eqref{main_eq} was introduced by Lou \cite{Lou1} (see also Ni \cite{Ni} and references therein):
Dividing the first equation of \eqref{main_eq} by $u(x)$ and integrating the resulting expression gives
\[ d \int_\Omega \frac{\Delta u}{u} = \|u\|_{L^1(\Omega)} - \|m\|_{L^1(\Omega)}. \]
By the boundary condition, integration by parts in the left-hand side leads to
\[ \|u\|_{L^1(\Omega)} - \|m\|_{L^1(\Omega)} = d \int_\Omega \biggl( \frac{|\nabla u|}{u} \biggr)^2 \ge 0. \]
Then one can see that
\begin{equation}\label{ratio}
  \frac{\|\udm\|_{L^1(\Omega)}}{\|m\|_{L^1(\Omega)}} \ge 1 \quad\text{for any}\;\; (d,m) \in (0,\infty) \times L^\infty_+(\Omega),
\end{equation}
where the equality holds only when $m(x)$ and $\udm(x)$ identically equal to a positive constant $m_0$.
In the ecological sense, we can regard $\|\udm\|_{L^1(\Omega)}$ and $\|m\|_{L^1(\Omega)}$ as the total population of species and the total amount of resources, respectively.
Then \eqref{ratio} means that the heterogeneity of resource can benefit species. 
Motivated by this fact, some optimization problems concerning \eqref{main_eq} have been studied in the field of elliptic equations.
We refer to \cite{LiLou, LiangLou, Lou1} and \cite{DeAngelisZhangNiWang, Mazzari, MazzariNadinPrivat, NagaharaYanagida} for the dependence of $u_{d,m}$ upon $d>0$
(for fixed $m$) and $m$ (for fixed $d$), respectively.
See \cite{HeLamLouNi, HeNi1, HeNi2, HeNi3, HeNi4, HeNi5, Lou3, LouWang}
for applications of information on $u_{d,m}$ to the dynamics of solutions
to a class of diffusive Lotka-Volterra systems.
We also refer to book chapters
\cite{LamLou}, \cite{Lou2} and \cite{Ni} to know trends of studies for
\eqref{main_eq} and related problems.

This paper focuses on a biological question: ``How can we maximize the total population under the limited total resources?''
From such a viewpoint, Ni proposed the following optimization problem: ``What is the supremum of
\begin{equation}\label{rate}
  \frac{\|\udm\|_{L^1(\Omega)}}{\|m\|_{L^1(\Omega)}}
\end{equation}
for any $d>0$ and any $m \in L_+^\infty(\Omega)$?'', and moreover, he gave a conjecture that the supremum is $3$ in the one-dimensional case when $\Omega=(0,\ell)$ (see \cite[Abstract]{BaiHeLi}).
Concerning this conjecure, Bai, He and Li \cite{BaiHeLi} proved the validity.
The procedure of their proof \cite{BaiHeLi} first shows that $\|\udm\|_{L^1(0,\ell)} < 3 \|m\|_{L^1(0,\ell)}$ for any $(d,m) \in (0,\infty) \times L_+^\infty(0,\ell)$, and next, shows that for
\begin{equation}\label{BHL}
  \dep = \sqrt{\e},\quad
  \mep(x) = \begin{cases} 
    1/\e &\text{for}\;\; x \in [0,\e],\\
    \;0 &\text{for}\;\; x \in (\e, \ell] 
  \end{cases}
\end{equation}
with small $\e > 0$, the solution $\uep(x) = \udepmep(x)$ of \eqref{main_eq} with $\Omega = (0,\ell)$ satisfies
\begin{equation}\label{to3}
  \frac{\|\uep\|_{L^1(0,\ell)}}{\|\mep\|_{L^1(0,\ell)}} = \|\uep\|_{L^1(0,\ell)} \nearrow 3 \quad\text{as}\quad \e \searrow 0.
\end{equation}
It can be verified that $\uep(x)$ is monotone decreasing for $x \in (0,\ell)$ and decays to zero over any compact set contained in $(0,\ell]$ as $\e \to 0$, but $\uep(0)$ blows up as $\e \to 0$.
Here it should be noted that their elegant proof using the energy method established \eqref{to3} without any more detailed profiles of $\uep(x)$.
In \cite{Inoue}, the first author of the present paper derived some detailed information on the profile of $\uep(x)$.
Among other things, he obtained
\begin{equation}\label{3/2}
  \lim_{\e \to 0} \sqrt{\e} \uep(x) = \frac{3}{2} \quad(0 \leq x \leq \e).
\end{equation}
In the one-dimensional habitat case, \eqref{BHL} tells that a concentration of resources and a suitable small diffusion rate make the total population per the total resources be a maximizing sequence.
Furthermore, \eqref{3/2} means that, in the resource interval $[0,\e]$, the growth rate $O(1/\sqrt{\e})$ of species is less than that of resource.

This paper considers the supremum of the ratio in \eqref{ratio} in the case when $\Omega$ is a unit ball $B_1^n := \{\, x \in \R^n \mid |x|<1\, \}$.
The following theorem is a crucial part of a main result (Theorem \ref{mainthm2}):
\begin{theorem}\label{main_thm}
  Let $\udm(x)$ be a positive solution of \eqref{main_eq} with $\Omega = B_1^n$. If $n \ge 2$, then
  \begin{equation}\label{L1ratio_infty}
    \sup_{(d,m) \in (0,\infty) \times L_+^\infty(B_1^n)} \frac{\|\udm\|_{L^1(B_1^n)}}{\|m\|_{L^1(B_1^n)}} = \infty.
  \end{equation}
\end{theorem}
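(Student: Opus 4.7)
The plan is to construct, for each small $\varepsilon > 0$, an explicit pair $(\dep, \mep)$ together with a radially symmetric weak sub-solution $\underline u_\varepsilon$ of \eqref{main_eq} whose $L^1$ mass dwarfs that of $\mep$. Pairing $\underline u_\varepsilon$ with any sufficiently large constant super-solution and invoking the sub-super solution method, combined with the uniqueness part of Proposition \ref{prop1}, yields $\udepmep \ge \underline u_\varepsilon$, reducing \eqref{L1ratio_infty} to showing
\[ \frac{\|\underline u_\varepsilon\|_{L^1(B_1^n)}}{\|\mep\|_{L^1(B_1^n)}} \longrightarrow \infty \quad\text{as}\quad \varepsilon \searrow 0. \]

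The construction exploits the explicit family of radial sub-solutions to the ``dead-core'' equation $d\Delta v = v^2$. A direct computation gives that $v(r) = C r^{-\alpha}$ satisfies $d \Delta v - v^2 \ge 0$ on $\{r \ge \varepsilon\}$ exactly when $C \le d\alpha(\alpha - n + 2)\varepsilon^{\alpha-2}$, assuming $\alpha \ge 2$ and $\alpha > n - 2$. I would set $\dep \equiv 1$ and choose $\alpha := \max(2, n-1)$, so that $\alpha \ge 2$, $\alpha > n-2$ and $\alpha < n$ hold in every dimension $n \ge 2$. Set $C_\varepsilon := \alpha(\alpha - n + 2)\varepsilon^{\alpha - 2}$ (equality at $|x| = \varepsilon$), define $\underline u_\varepsilon(x) := C_\varepsilon |x|^{-\alpha}$ on the annulus $\varepsilon \le |x| \le 1$, and extend into $B_\varepsilon^n$ by a quadratic $A_\varepsilon - B_\varepsilon |x|^2$ with $A_\varepsilon, B_\varepsilon$ fixed by $C^1$-matching at $|x| = \varepsilon$. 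Finally, let $\mep(x) := M\varepsilon^{-2}$ for $|x| \le \varepsilon$ and $\mep(x) := 0$ otherwise, with a constant $M = M(n, \alpha)$ independent of $\varepsilon$ chosen large enough to enforce the interior sub-solution condition.

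Verification has three parts. On the exterior annulus the sub-solution inequality reduces to $r^{\alpha-2} \ge \varepsilon^{\alpha-2}$, which is immediate. On the inner ball, where the quadratic gives $\Delta \underline u_\varepsilon = -2n B_\varepsilon$, every relevant quantity in $\underline u_\varepsilon(\mep - \underline u_\varepsilon) \ge -\dep \Delta \underline u_\varepsilon$ scales like $\varepsilon^{-4}$, so the condition reduces to a fixed algebraic lower bound on $M$. The Neumann condition $\partial_r \underline u_\varepsilon(1) = -C_\varepsilon \alpha \le 0$ is automatic, and the $C^1$-matching ensures $\Delta \underline u_\varepsilon \in L^\infty$ with no distributional singular part at $|x| = \varepsilon$. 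A direct integration then yields
\[ \|\mep\|_{L^1(B_1^n)} = \frac{M \omega_n}{n}\, \varepsilon^{n-2}, \qquad \|\underline u_\varepsilon\|_{L^1(B_1^n)} \ge \omega_n C_\varepsilon \int_\varepsilon^1 r^{n-1-\alpha}\, dr, \]
where $\omega_n$ is the surface measure of the unit sphere in $\mathbb{R}^n$. The integral is of order $\log(1/\varepsilon)$ when $n = 2$ and of order $1$ when $n \ge 3$, so the ratio is $\gtrsim \log(1/\varepsilon)$ for $n = 2$ and $\gtrsim \varepsilon^{-1}$ for $n \ge 3$, diverging as $\varepsilon \searrow 0$.

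The delicate step is the dimensional dependence of $\alpha$. The naive profile $r^{-2}$ works for $n \in \{2, 3\}$ but degenerates at $n = 4$, since $\alpha(\alpha - n + 2)$ with $\alpha = 2$ vanishes there and becomes negative beyond; one must switch to $\alpha = n - 1$ in dimensions $n \ge 4$, which makes $\underline u_\varepsilon$ decay faster so that for $n \ge 3$ the divergence of the ratio is driven by $\|\mep\|_{L^1} \to 0$ rather than $\|\underline u_\varepsilon\|_{L^1} \to \infty$. A secondary technical point is to check that $M$ can be chosen uniformly in $\varepsilon$ despite the interior profile of $\underline u_\varepsilon$ being of size $\varepsilon^{-2}$, and that the $C^1$-matching at $|x| = \varepsilon$ really produces no negative distributional delta in $\Delta \underline u_\varepsilon$ that would spoil the sub-solution inequality.
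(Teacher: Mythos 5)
Your proposal is correct and rests on the same basic machinery as the paper (weak sub-/super-solutions for a concentrated resource $m_\varepsilon$, a constant super-solution, a radial sub-solution with a power-law tail glued $C^1$ across $|x|=\varepsilon$, then Propositions \ref{prop1} and \ref{monoprop} to conclude $\udepmep\ge\underline{u}_\varepsilon$), but the quantitative choices are genuinely different and worth contrasting. The paper fixes the total resource, taking $\mep=\varepsilon^{-n}\chi_{\overline{B^n_\varepsilon}}$ so that $\|\mep\|_{L^1}=|B^n_1|$ for all $\varepsilon$, scales the diffusion as $\dep=c_1\varepsilon^{-(n-2)}$, and uses the tail $c_2e^{-1}|x|^{-n}$ (with an exponential cap inside), so that the ratio diverges because $\|\underline{\uep}\|_{L^1}\sim|\log\varepsilon|$ grows in every dimension; this yields the uniform explicit bound \eqref{belowest} and the ecological reading that the species height matches the resource height $O(\varepsilon^{-n})$. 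You instead freeze $d\equiv 1$, take the milder height $M\varepsilon^{-2}$, and use the tail $|x|^{-\alpha}$ with $\alpha=\max(2,n-1)$; for $n=2$ your mechanism coincides with the paper's ($\|m_\varepsilon\|_{L^1}$ constant, numerator $\sim\log(1/\varepsilon)$), but for $n\ge 3$ the divergence is driven by $\|m_\varepsilon\|_{L^1}=O(\varepsilon^{n-2})\to 0$ while the numerator stays of order $\varepsilon^{\alpha-2}$, giving the faster rate $\varepsilon^{-1}$. Your version is more elementary (no $\varepsilon$-dependent diffusion, purely algebraic profiles) at the cost of a dimension-dependent case split and a less uniform statement; the paper's version buys a single formula valid for all $n\ge 2$ under the normalization $\|\mep\|_{L^1}\equiv|B^n_1|$. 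I verified your key computations: $\Delta(Cr^{-\alpha})=C\alpha(\alpha-n+2)r^{-\alpha-2}$, the binding of the exterior constraint at $r=\varepsilon$ when $\alpha\ge2$, the $C^1$ matching giving $A_\varepsilon=(1+\alpha/2)\alpha(\alpha-n+2)\varepsilon^{-2}$ and $B_\varepsilon=(\alpha/2)\alpha(\alpha-n+2)\varepsilon^{-4}$, and that $M\ge(1+\alpha/2)\alpha(\alpha-n+2)+n\alpha$ suffices uniformly in $\varepsilon$; all check out. One small slip: your claim that $\alpha<n$ holds for every $n\ge2$ fails at $n=2$ (there $\alpha=2=n$), but this is harmless --- it is precisely the case $\alpha=n$ that produces the logarithm you correctly compute, and integrability near the origin is never at issue since the power-law profile lives only on the annulus.
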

This result is a big contrast to that of the one-dimensional case (\cite{BaiHeLi}) where the above supremun is $3$,
and moreover, gives a negative answer to an open question in
\cite[(8.36)]{LamLou}.
The proof of Theorem \ref{main_thm} is based on the sub-super solution method.
We employ a concentration setting of resources near the center as $\mep(x) = 1/\e^n$ for $x \in \overline{B_\e^n} := \{\, x \in \R^n \mid |x| \le \e \,\}$ and $\mep(x) = 0$ otherwise. 
Then a control of the diffusion rate as $\dep = O(1 / \e^{n-2})$ enables us to construct an $L^1$ unbounded sequence of sub-solutions as $\e \to +0$.
This sub-solution also ensures that the growth rate of species in the resource region $\overline{B_\e^n}$ is equal to $O(1/\e^n)$ which is same as that of resources.

This paper consists of three sections.
Section 2 is devoted to the proof of the main result.
In Section 3, some concluding remarks related to the result will be given.

\section{Construction of an \texorpdfstring{$L^1$}{}-unbounded sequence of solutions}

The proof of Theorem \ref{main_thm} is based on the (weak) sub-super solution method for a class of elliptic equations. Since $m(x)$ is allowed to be a discontinuous function, we note a framework of the method. Consider the following Neumann problem for a class of semilinear elliptic equations including \eqref{main_eq}:
\begin{equation}\label{semilinear_Nuemann}
  \begin{cases}
    d\, \Delta u + f(x,u) = 0 &\text{in}\;\; \Omega,\\
    \; \partial_{\nu}u = 0 &\text{on}\;\; \partial\Omega,
  \end{cases}
\end{equation}
where $f(x,t)$ is a Carath\'eodory function for $(x,t) \in \Omega \times \R$, that is, for any fixed $t \in \R$, $x \mapsto f(x,t)$ is a measurable function in $\Omega$ and for any fixed $x \in \Omega$, $t \mapsto f(x,t)$ is a continuous function.

\begin{definition}(e.g., \cite[p.52]{Du})
  A function $u(x)$ is called a (weak) sub-solution of \eqref{semilinear_Nuemann} if $u \in W^{1,p}(\Omega) \;(p>1)$, $f(x,u(x))$ belongs to $L^{p/(p-1)}(\Omega)$ and
  \begin{equation}\label{var}
    d \int_{\Omega} \nabla u \cdot \nabla \varphi \le \int_{\Omega} f(x,u(x)) \varphi
  \end{equation}
  for any $\varphi \in W^{1,p}(\Omega)$ with $\varphi \ge 0$ a.e. in $\Omega$. If the inequality in \eqref{var} is reversed, $u(x)$ is called a (weak) super-solution.
\end{definition}
The following proposition is fundamental but useful and will play an important role in the proof of Theorem \ref{main_thm}.
\begin{proposition}(e.g., \cite[Theorems 4.9 and 4.12]{Du})\label{monoprop}
  Let $\underline{u}(x)$ and $\overline{u}(x)$ be (weak) sub- and super-solutions of \eqref{semilinear_Nuemann}, respectively, satisfying $\underline{u} \leq \overline{u}$ a.e. in $\Omega$. Suppose that there exists a function $k \in L^{p/(p-1)}(\Omega) \;(p>1)$ such that
  \[ |f(x,t)| \leq k(x) \;\;\text{for a.e.}\; x \in \Omega \;\;\text{and all}\;\; t \in [\underline{u}(x), \overline{u}(x)]. \]
  Then \eqref{semilinear_Nuemann} admits a weak solution $u(x)$ satisfying $\underline{u} \leq u \leq \overline{u}$ a.e. in $\Omega$.
\end{proposition}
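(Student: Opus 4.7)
My plan is to combine a truncation of $f$ outside the interval $[\underline{u}, \overline{u}]$ with Schauder's fixed-point theorem, and then verify that any fixed point of the truncated problem automatically lies between $\underline{u}$ and $\overline{u}$. A crucial device will be to rewrite the PDE in the form $-d\Delta u+u = u+f(x,u)$, so that the added identity term supplies the coercivity that replaces the monotonicity of $f$ (which is not assumed).

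First I will introduce the Carath\'eodory truncation $T(x,t) := \max\{\underline{u}(x),\min\{t,\overline{u}(x)\}\}$ and set $g(x,t) := T(x,t) + f(x,T(x,t))$. Because $T(x,t)$ always lies in $[\underline{u}(x),\overline{u}(x)]$, the hypothesis on $f$ yields the $t$-free bound $|g(x,t)| \leq |\underline{u}(x)| + |\overline{u}(x)| + k(x) =: K(x) \in L^{p/(p-1)}(\Omega)$.

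Next, for each $v$ in a suitable Banach space $X$ (e.g.\ $L^p(\Omega)$, or, when Sobolev embeddings permit, $C(\overline{\Omega})$), let $\Phi v$ be the unique weak solution of the linear Neumann problem $-d\Delta w + w = g(x,v)$, $\partial_\nu w = 0$. Elliptic $L^q$-theory for the invertible operator $-d\Delta + I$ (with $q := p/(p-1)$) gives $\Phi v \in W^{2,q}(\Omega)$ together with a bound $\|\Phi v\|_{W^{2,q}} \leq C \|K\|_{L^q}$ that is uniform in $v$. Combining this with the compact embedding $W^{2,q}(\Omega) \hookrightarrow X$ (choosing $X$ appropriately) makes $\Phi$ continuous and compact on a sufficiently large closed ball of $X$, so Schauder's fixed-point theorem produces $u \in X$ satisfying
\[ -d\Delta u + u = T(x,u) + f(x,T(x,u)) \quad \text{weakly in}\; \Omega, \qquad \partial_\nu u = 0. \]

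The remaining, and main, step is to check $\underline{u} \leq u \leq \overline{u}$ a.e. For the upper bound, I subtract the super-solution inequality for $\overline{u}$ (after adding $\int_\Omega \overline{u}\,\varphi$ to both sides) from the equation for $u$, tested against $\varphi := (u-\overline{u})_+ \in W^{1,p}(\Omega)$. On the set $\{u > \overline{u}\}$ the truncation gives $T(x,u) = \overline{u}$, so the nonlinear right-hand side cancels there, leaving
\[ d \int_\Omega |\nabla (u-\overline{u})_+|^2 + \int_\Omega [(u-\overline{u})_+]^2 \leq 0. \]
Both integrands are nonnegative, forcing $(u-\overline{u})_+ \equiv 0$; the symmetric choice $\varphi = (\underline{u}-u)_+$ yields $\underline{u} \leq u$. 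Once $u \in [\underline{u},\overline{u}]$ is established, $T(x,u) = u$, and the fixed-point equation collapses to the weak form of \eqref{semilinear_Nuemann}. I expect this final comparison step to be the most delicate: it works only because the truncation exactly matches $\overline{u}$ (respectively $\underline{u}$) on the relevant bad set, and because the added $+I$ term furnishes the $L^2$ contribution in the above inequality, which would be missing for the pure Laplacian under Neumann data.
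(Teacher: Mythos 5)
The paper does not actually prove this proposition---it is imported verbatim from Du's book (the cited Theorems 4.9 and 4.12)---so there is no in-text argument to compare yours against line by line. Your route (truncate $f$ onto the order interval, solve the truncated problem by Schauder's fixed-point theorem applied to $(-d\Delta+I)^{-1}$, then squeeze the fixed point between $\underline{u}$ and $\overline{u}$ by testing with $(u-\overline{u})_+$ and $(\underline{u}-u)_+$) is the standard proof of this kind of statement, and the part you single out as delicate is handled correctly: on $\{u>\overline{u}\}$ the truncation makes the right-hand sides cancel exactly, and the artificially added zeroth-order term is precisely what rules out $(u-\overline{u})_+$ being a nonzero constant, which the pure Neumann Laplacian could not exclude.

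Two integrability points are glossed over, and they are the only places where your argument, as written, does not cover the statement in its full generality. First, you claim $K:=|\underline{u}|+|\overline{u}|+k\in L^{p/(p-1)}(\Omega)$, but the hypotheses only give $\underline{u},\overline{u}\in W^{1,p}(\Omega)$, and $W^{1,p}(\Omega)\hookrightarrow L^{p/(p-1)}(\Omega)$ fails when $p$ is close to $1$ (the Sobolev exponent $np/(n-p)$ can be far below $p/(p-1)$); without this, the uniform $W^{2,q}$ bound for $\Phi$ is not available. Second, your fixed point lives in $W^{2,q}$ with $q=p/(p-1)$, and for large $p$ one has $W^{2,q}\not\hookrightarrow W^{1,p}$, so $(u-\overline{u})_+$ need not be an admissible test function in \eqref{var}, nor is the finiteness of $\int_\Omega(u-\overline{u})(u-\overline{u})_+$ automatic. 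Both defects disappear if one additionally assumes $\underline{u},\overline{u}\in L^\infty(\Omega)$ (then $K$ differs from $k$ by a bounded function and one may run the whole scheme in $H^1$), and that is exactly the situation in which the proposition is applied in this paper: $\overline{u_\e}\equiv\e^{-n}$, $\underline{u_\e}$ is continuous and bounded, and $f(x,t)=t(m_\e(x)-t)$ is bounded on the order interval. So your proof is sound for the use the paper makes of the proposition, but to prove the statement as literally formulated you should either add the boundedness hypothesis or replace the crude bound on $T(x,t)$ by an argument that keeps the zeroth-order term in the energy space.
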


Hereafter we consider \eqref{main_eq} in the case when $\Omega$ is the multi-dimensional unit ball $B^n_1=\{\, x \in \R^n \mid |x|<1 \,\}$ with $n \ge 2$. By referring the setting of $m(x)$ in \cite{BaiHeLi} for the one-dimensional case, we set
\begin{equation}\label{mep}
  m(x) = \mep(x) =
  \begin{cases}
    1 / \e^n &\text{for}\; x \in \overline{B^n_\e}, \\
    0 &\text{for}\; x \in \overline{B^n_1} \setminus \overline{B^n_\e}
  \end{cases} 
\end{equation}
for any $0 < \e < 1$. From the viewpoint of the ecological model, the above setting of $\mep(x)$ concentrates all resources near the center of the unit ball habitat. This location of resources differs from that in the one-dimensional case where all resources are put near an endpoint of the $(0,\ell)$. Hence it follows that
\[ \|\mep\|_{L^1(B^n_1)}=|B^n_1|, \]
where $|B^n_1|$ denotes the volume of $B^n_1$. The following theorem is a main result of this paper which immediately leads to Theorem \ref{main_thm}.
\begin{theorem}\label{mainthm2}
  Suppose that the dimension number $n$ satisfies $n \ge 2$. Then there exist positive constants $c_1$ and $c_2$ depending only on $n$ such that the unique positive solution $\uep(x)$ of
  \begin{equation}\label{inoue-eq}
    \begin{cases}
      \sfrac{c_1}{\e^{n-2}} \Delta u + u (\mep(x)-u) = 0 &\text{in}\;\; B^n_1,\\
      \; \partial_\nu u = 0 &\text{on}\;\; \partial B^n_1.
    \end{cases}
  \end{equation}
  satisfies
  \[ \frac{\|\uep\|_{L^1(B^n_1)}}{\|\mep\|_{L^1(B^n_1)}} \ge 
    c_2 \biggl( 1 - \frac{1}{e}  + \frac{n}{e}\,|\log \e| \biggr) \]
  for any $0 < \e < 1$.
\end{theorem}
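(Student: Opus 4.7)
The plan is to apply the sub- and super-solution method (Proposition \ref{monoprop}) with a radial sub-solution $\underline u_\varepsilon$ whose $L^1$-norm grows like $|\log \varepsilon|$, and to identify the resulting sandwiched solution with $u_\varepsilon$ via the uniqueness in Proposition \ref{prop1}. As super-solution one may simply take the constant $\overline u_\varepsilon \equiv 1/\varepsilon^n = \|m_\varepsilon\|_{L^\infty}$: since $\overline u_\varepsilon \ge m_\varepsilon$ a.e., $\overline u_\varepsilon (m_\varepsilon - \overline u_\varepsilon) \le 0$ and $\Delta \overline u_\varepsilon = 0$, and the Neumann condition is trivial.

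For the sub-solution I would use the globally $C^1$, radial function
\[
\underline u_\varepsilon(r) = \begin{cases} \dfrac{a}{\varepsilon^n} - \dfrac{na}{(n+2)\varepsilon^{n+2}}\,r^2 & (0 \le r \le \varepsilon), \\[6pt] \dfrac{2a}{(n+2)\,r^n} & (\varepsilon \le r \le 1), \end{cases}
\]
with a free parameter $a \in (0,1)$ (say $a = 1/2$). The two pieces and their radial derivatives agree at $r = \varepsilon$ by construction, and the outer profile $r^{-n}$ is chosen precisely because
\[
\int_{B_1^n\setminus B_\varepsilon^n} \frac{dx}{r^n} = n|B_1^n|\,|\log\varepsilon|,
\]
which is the source of the logarithmic growth in the final estimate.

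The weak sub-solution inequality reduces to three pointwise checks. On the inner ball, $\Delta \underline u_\varepsilon = -2n^2 a/((n+2)\varepsilon^{n+2})$ is constant, so the inequality $\frac{c_1}{\varepsilon^{n-2}}\Delta \underline u_\varepsilon + \underline u_\varepsilon(m_\varepsilon - \underline u_\varepsilon) \ge 0$ becomes algebraic; it is tightest at $r = \varepsilon$ (where $\underline u_\varepsilon$ attains its minimum $2a/((n+2)\varepsilon^n)$) and gives $c_1 \le (n+2-2a)/(n^2(n+2))$. On the annulus, $\Delta(C_\varepsilon/r^n) = 2nC_\varepsilon/r^{n+2}$ with $C_\varepsilon = 2a/(n+2)$, and the inequality reduces (worst case $r = \varepsilon$) to the matching lower bound $c_1 \ge a/(n(n+2))$. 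The global $C^1$-regularity of $\underline u_\varepsilon$ at $r = \varepsilon$ ensures that no singular interface term contaminates the weak formulation, and $\underline u'_\varepsilon(1) = -nC_\varepsilon < 0$ automatically satisfies the Neumann sub-solution condition. For $a = 1/2$ both inequalities hold with $c_1 := 1/(2n(n+2))$.

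Proposition \ref{monoprop} then yields a weak solution of \eqref{inoue-eq} lying between $\underline u_\varepsilon$ and $\overline u_\varepsilon$, which by elliptic regularity and Proposition \ref{prop1} coincides with $u_\varepsilon$. In particular $\|u_\varepsilon\|_{L^1(B_1^n)} \ge \|\underline u_\varepsilon\|_{L^1(B_1^n)}$, and direct integration (using $\|m_\varepsilon\|_{L^1} = |B_1^n|$) gives
\[
\frac{\|\underline u_\varepsilon\|_{L^1(B_1^n)}}{\|m_\varepsilon\|_{L^1(B_1^n)}} = \frac{4a(n+1)}{(n+2)^2} + \frac{2an}{n+2}\,|\log\varepsilon|,
\]
which for $a = 1/2$ can be majorised in the form $c_2(1 - 1/e + (n/e)|\log\varepsilon|)$ by choosing $c_2 = c_2(n) > 0$ small enough to accommodate the worst case $\varepsilon \to 1^-$. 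The main obstacle is exactly the weak sub-solution inequality at $r = \varepsilon$: the naive piecewise choice (constant $1/\varepsilon^n$ on $B_\varepsilon^n$ glued to $1/r^n$ outside) \emph{fails}, because its radial derivative jumps from $0$ to $-n/\varepsilon^{n+1}$ there, producing a negative singular Laplacian that defeats the weak inequality against non-negative test functions concentrated near the interface. The $C^1$-matching in the ansatz is what circumvents this, at the cost of keeping $a$ strictly below $1$ so that the inner pointwise inequality and the outer pointwise inequality can simultaneously hold.
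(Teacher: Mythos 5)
Your proposal is correct and follows essentially the same strategy as the paper: the constant super-solution $1/\e^n$, a radial sub-solution with outer profile proportional to $|x|^{-n}$ (whose integral produces the $|\log\e|$ growth), a $C^1$ gluing at $|x|=\e$ so that no singular interface term appears in the weak formulation, and the diffusion scaling $d_\e = c_1/\e^{n-2}$; the only difference is cosmetic, namely your quadratic inner cap in place of the paper's $\sfrac{c_2}{\e^n}e^{-|x|^n/\e^n}$, and your parameter bookkeeping (including the final adjustment of $c_2$ to match the stated form of the lower bound) checks out.
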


\begin{proof}
It follows from Proposition \ref{prop1} that for each
\begin{equation}\label{dep}
  d = \dep := \frac{c_1}{\e^{n-2}}
\end{equation}
and $\mep(x)$ introduced by \eqref{mep}, there exists a unique positive solution $\uep(x)$ of \eqref{inoue-eq}. By virtue of Proposition \ref{monoprop}, if we can find a super-solution $\overline{\uep}(x)$ and a sub-solution $\underline{\uep}(x)$ satisfying
\begin{equation}\label{sub<super}
  0 < \underline{\uep} \leq \overline{\uep} \quad\text{in}\;\; B^n_1,
\end{equation}
then $\underline{\uep} \leq \uep \leq \overline{\uep}$ in\ $B^n_1$. Since $\|\mep\|_{L^1(B^n_1)} = |B^n_1|$ is independent of $0 < \e < 1$, then our strategy is to construct a sub-solution $\underline{\uep}(x)$ and a super-solution $\overline{u_\e}(x)$ satisfying not only \eqref{sub<super} but also
\[ \lim_{\e \to 0} \|\underline{\uep}\|_{L^1(B^n_1)} \to \infty. \]
To do so, we introduce two functions $\overline{\uep}(x)$ and $\underline{\uep}(x)$ defined over $\overline{B^n_1}$ as 
\begin{equation}\label{super_sol}
  \overline{\uep}(x) := \frac{1}{\e^{n}} \quad \text{for}\;\; x \in \overline{B^n_1}
\end{equation}
and
\begin{equation}\label{sub_sol}
  \underline{\uep}(x) :=
  \begin{cases}
    \; \sfrac{c_2}{\e^n} e^{-|x|^n / \e^n} &\text{for}\;\; x \in \overline{B^n_\e}, \vspace{1mm}\\
    \; \sfrac{c_2}{e|x|^n} &\text{for}\;\; x \in \overline{B^n_1} \setminus \overline{B^n_\e}.
  \end{cases}
\end{equation}
Here $c_2$ will be determined later independently of $0<\e<1$. It is easily verified that $\underline{\uep}(x)$ is in the $C^2$ class except for $|x| = \e$, but still in the $C^{1}$ class. In what follows, we seek for a range of parameters $(c_1,c_2)$ so that 
\begin{enumerate}
  \item[(a)] $\overline{\uep}(x)$ is a super-solution of \eqref{main_eq},
  \item[(b)] $\underline{\uep}(x)$ is a sub-solution of that, and
  \item[(c)] $(0 <)\, \underline{\uep} \leq \overline{\uep} \quad\text{in}\;\; B^n_1$.
\end{enumerate}

Since $\mep(x)$ is defined as \eqref{mep}, then $\overline{\uep}(x) \equiv 1/\e^n$ satisfies
\[ \dep \Delta \overline{\uep} + \overline{\uep} (\mep(x) - \overline{\uep}) =
  \begin{cases}
    \; 0 &\text{for}\;\; x \in \overline{B^n_\e}, \\
    - \overline{\uep}^{2} < 0 &\text{for}\;\; x \in B^n_1 \setminus \overline{B^n_\e}
  \end{cases} \]
and $\partial_\nu \overline{\uep} = 0$ on $\partial{B^n_1}$. Hence $\overline{\uep}$ is a super-solution of \eqref{inoue-eq}

Concerning (b), we have to check the inequality of \eqref{var}:
\begin{equation}\label{sub-condition}
  \dep \int_{B^n_1} \nabla \underline{\uep} \cdot \nabla \varphi \leq \int_{B^n_1} \underline{\uep} (\mep - \underline{\uep}) \varphi
\end{equation}
for any $\varphi \in W^{1,p}(B^n_1)$ with $\varphi \ge 0$ a.e. in $B^n_1$. Thanks to the fact
\[ \underline{\uep} \in C^{2}(\overline{B^n_1} \setminus \{ |x|=\e \}) \cap C^1(\overline{B^n_1}), \]
for the verification of \eqref{sub-condition}, it suffices to show that
\begin{equation}\label{diff_ineq1}
  \dep \Delta \underline{\uep} + \underline{\uep} (\mep - \underline{\uep}) \ge 0 \quad \text{for}\;\; x \in B^n_1 \setminus \{ |x|=\e \}
\end{equation}
and
\begin{equation}\label{diff_ineq2}
  \partial_\nu \underline{\uep} \leq 0 \quad\text{on}\;\; \partial B^n_1.
\end{equation}
The boundary condition \eqref{diff_ineq2} is obviously satisfied. Then our crucial task is to find a parameter range of $(c_1, c_2)$ satisfying \eqref{diff_ineq1}. Since $\underline{\uep}(x)$ is a radial function, we know that the required inequality \eqref{diff_ineq1} is equivalent to
\begin{equation}\label{diff_ineq1_radial}
  \begin{cases}
    \dep \biggl( \vep'' + \sfrac{n-1}{r} \vep' \biggr) + \vep (\tilde{m}_{\e}(r) - \vep) \ge 0 &\text{for}\;\; 0<r<1 \;\;\text{and}\;\; r \neq \e,\\
    \; \vep'(0) = 0, &
  \end{cases}
\end{equation}
where $\vep(r) := \underline{\uep}(x)$ and $\tilde{m}_{\e}(r) := \mep(x)$ for $r=|x| \in [0,1]$, that is,
\begin{equation}\label{vm_radial} 
  \vep(r) =
  \begin{cases}
    \; \sfrac{c_2}{\e^n} e^{-r^n / \e^n} &(0 \leq r \leq \e), \vspace{1mm} \\
    \; \sfrac{c_2}{e r^n} &(\e < r \leq 1),
  \end{cases} \qquad
  \tilde{m}_{\e}(r) =
  \begin{cases}
    1/\e^n & (0 \leq r \leq \e), \\
    \; 0 & (\e < r \leq 1),
  \end{cases} 
\end{equation}
and the prime symbol $'$ represents the derivative by $r$. Then straightforward calculations yield
\[ \vep'(r) = \begin{cases}
  - \sfrac{c_2 n r^{n-1}}{\e^{2n}} e^{-r^n / \e^n} &(0 \leq r \leq \e), \vspace{1mm}\\
  \; - \sfrac{c_2 n}{er^{n+1}} &(\e < r \leq 1)
  \end{cases} \]
and
\[ \vep''(r) = \begin{cases}
  \dfrac{c_2 n(n-1) r^{n-2}}{\e^{2n}} \biggl( \dfrac{n r^n}{(n-1)\e^n}-1 \biggr) e^{-r^n / \e^n} &(0 \leq r \leq \e),
  \vspace{1mm}\\
  \; \dfrac{c_2 n(n+1)}{e r^{n+2}} &(\e < r \leq 1).
  \end{cases} \]
Here it should be noted that $\vep \in C^2([0,\e) \cup (\e,1]) \cap C^1([0,1])$ and the multi-dimensional situation $n \ge 2$ ensures $\vep'(0) = 0$. For $0 < r < \e$, one can see
\begin{align*}
  \frac{c_1}{\e^{n-2}} &\left( \vep'' + \frac{n-1}{r} \vep' \right) + \vep \left(\frac{1}{\e^n} - \vep\right) \\
  &= e^{-r^n/\e^n} \left( \frac{c_1c_2n^2}{\e^{4n-2}}r^{2n-2} - \frac{2c_1c_2n(n-1)}{\e^{3n-2}}r^{n-2} + \frac{c_2}{\e^{2n}} - \frac{c_2^2}{\e^{2n}}e^{-r^n/\e^n} \right).
\end{align*}
To assure the positive minimum of the right-hand side, we estimate the bracket part as follows
\begin{align*}
  & \frac{c_1c_2n^2}{\e^{4n-2}}r^{2n-2} - \frac{2c_1c_2n(n-1)}{\e^{3n-2}}r^{n-2} + \frac{c_2}{\e^{2n}} - \frac{c_2^2}{\e^{2n}}e^{-r^n/\e^n} \\
  & > - \frac{2c_1c_2n(n-1)}{\e^{2n}} + \frac{c_2}{\e^{2n}} - \frac{c_2^2}{\e^{2n}} \\
  & = \frac{c_2}{\e^{2n}} (1-2c_1n(n-1)-c_2) \quad \text{for any}\;\; 0 < r < \e.
\end{align*}
Thus if $1-2c_1n(n-1)-c_2 \ge 0$, then the differential inequality \eqref{diff_ineq1_radial} holds for $0<r<\e$. On the other hand, for $\e < r < 1$, we know
\[ \frac{c_1}{\e^{n-2}} \left( \vep'' + \frac{n-1}{r} \vep' \right) - v_{\e}^2 = \frac{c_2}{er^{n+2}} \left( \frac{2c_1n}{\e^{n-2}} - \frac{c_2}{er^{n-2}} \right) \]
and
\[ \frac{2c_1n}{\e^{n-2}} - \frac{c_2}{er^{n-2}} \ge \frac{1}{\e^{n-2}} \left( 2c_1n - \frac{c_2}{e} \right). \]
Thus if $2c_1n - c_2/e \ge 0$, then \eqref{diff_ineq1_radial} holds for $\e < r < 1$. Therefore, we know that if $(c_1,c_2)$ satisfies
\begin{equation}\label{param}
  1-2c_1n(n-1)-c_2 \ge 0 \;\;\text{and}\;\; 2c_1n - \frac{c_2}{e} \ge 0,
\end{equation}
then \eqref{diff_ineq1} holds, and thereby, the required (b) is satisfied. Here it is noted that the set
\[ T:= \{\, (c_1, c_2) \in \R^{2}_{> 0} \mid (c_1,c_2) \text{\ satisfies \eqref{param}} \,\} \]
forms a triangle whose vertices are
\[ (c_1,c_2) = (0,0),\, \biggl(\sfrac{1}{2n(e+n-1)}, \sfrac{e}{e+n-1}\biggr),\, \biggl(\sfrac{1}{2n(n-1)}, 0\biggr). \]

The final condition (c) $\underline{\uep} \leq \overline{\uep}$ in $B^n_1$ holds true if and only if $c_2 \leq 1$. However the condition $c_2 \leq 1$ is already necessary for \eqref{param}.

Consequently, we can deduce that if $(c_1, c_2) \in T$, then $\overline{\uep}(x)$ and $\underline{\uep}(x)$ introduced by \eqref{super_sol} and \eqref{sub_sol} satisfies (a)-(c). Therefore, Propositions \ref{prop1} and \ref{monoprop} imply that the unique positive solution $\uep(x)$ of \eqref{inoue-eq} satisfies $\underline{\uep}(x) \leq \uep(x) \leq \overline{\uep}(x)$ for all $x \in \overline{B^n_1}$. In view of \eqref{vm_radial}, one can see that
\begin{align*}
  \|\underline{\uep}\|_{L^1(B^n_1)} &= A_n \int_0^1 \vep(r) r^{n-1} \,dr \\
  &=c_2 A_n \biggl( \int_0^\e \frac{r^{n-1}}{\e^n} e^{-r^n/\e^n} \,dr + \int_\e^1 \frac{1}{er} \,dr \biggr)\\
  &= c_2 A_n \biggl( \frac{1}{n} \biggl(1-\frac{1}{e}\biggr) + \frac{1}{e} |\log\e| \biggr),
\end{align*}
where $A_n$ denotes the surface area of $\partial B^n_1$. Since $\|\mep\|_{L^1(B_0(1))}=|B^n_1|=A_n/n$, then we have
\begin{equation}\label{belowest}
 \frac{ \|\uep\|_{L^1(B^n_1)} }{ \|\mep\|_{L^1(B^n_1)} } \ge \frac{ \|\underline{\uep}\|_{L^1(B^n_1)} }{ \|\mep\|_{L^1(B^n_1)} } = c_2 \biggl( 1-\frac{1}{e} + \frac{n}{e}\,|\log \e| \biggr). 
\end{equation}
Thus the proof of Theorem \ref{mainthm2} is complete.
\end{proof}

\begin{proof}[Proof of Theorem \ref{main_thm}]
By setting $\e \to 0$ in \eqref{belowest}, we see that the unique positive solution $\uep(x)$ of \eqref{inoue-eq} satisfies
\[ \lim_{\e \to 0} \frac{\|\uep\|_{L^1(B^n_1)}}{\|\mep\|_{L^1(B^n_1)}} = \infty, \]
which implies \eqref{L1ratio_infty}. The proof of Theorem \ref{main_thm} is complete.
\end{proof}

\section{Concluding remarks}

In this section, we give some concluding remarks.
For each dimension number $n\ge 1$ and any
$(d,m)\in (0,\infty) \times L^{\infty}_{+}(B^{n}_{1})$,
we define
\[
I_{n}(d,m):=\dfrac{\|u_{d,m}\|_{L^{1}(B^{n}_{1})}}{\|m\|_{L^{1}(B^{n}_{1})}},\]
where
$B^{n}_{1}:=\{\,x\in\mathbb{R}^{n}\,|\,|x|<1\,\}$.
Concerning the maximizing problem to consider
\[
M_{n}:=\sup_{(d,m) \in (0,\infty) \times L^{\infty}_{+}(B^{n}_{1}) }I_{n}(d,m),
\]
Theorem \ref{main_thm} reveals a fact that $M_{n}=\infty$ if $n\ge 2$,
which is a big contrast to the one-dimensional situation 
$M_{1}=3$ obtained by \cite{BaiHeLi}.

In the one-dimensional case when $n=1$,
a maximizing sequence 
$(d_{\e}, m_{\e})=(\sqrt{\e}, \e^{-1}\chi_{[0,\e]})$
realizes $I(d_{\e}, m_{\e})\nearrow M_{1}=3$
as $\e\searrow 0$ (\cite{BaiHeLi}),
where $\chi_{A}$ denotes the characteristic function of the set $A$.
This result says that, 
under the concentration setting of resource as $m_{\e}=\e^{-1}\chi_{[0,\e]}$,
a small control of the diffusion rate as $d_{\e}=\sqrt{\e}$
can make $I_{1}(d_{\e}, m_{\e})$ tend to the supremum $3$ from below
as $\e\to +0$.
In this situation as $\e\to +0$,
the profile of $u_{\e}(x)$ obtained by \cite{Inoue} shows that
$u_{\e}(x)$ with $x\in[0,\e]$ grows with the order $O(1/\sqrt{\e})$.
This result means that the species in the resource interval $[0,\e]$
cannot follow the height (the $L^{\infty}$ norm) $1/\e$ of resource.
Furthermore, the singular limit $d_{\e}=\sqrt{\e}\to 0$
leads to the shrink property that
$u_{\e}(x)\to 0$ uniformly in any compact set contained
in $(0,1]$ as $\e\to +0$.

In the two-dimensional case when $n= 2$,
Theorem \ref{mainthm2} asserts that,
under the concentration of resource near the center
as $m_{\e}=\e^{-2}\chi_{B^{2}_{\e}}$,
a middle control of the diffusion rate as $d_{\e}=c_{1}$
(independent of $\e$) can make $I_{2}(c_{1}, m_{\e})$
tend to infinity as $\e\to +0$.
Furthermore, the profile of the sub-solution 
$\underline{u_{\e}}=c_{2}\e^{-2}\exp(-(|x|/\e)^{2})$ 
for $x\in \overline{B^{2}_{\e}}$ 
ensures that
$u_{\e}(x)$
$(\,\ge \underline{u}_{\e}(x)\,)$
for $x\in \overline{B^{2}_{\e}}$ grows with the order
$O(1/\e^{2})$ as $\e\to +0$.
This fact means that the species in the resource disk
$B^{2}_{\e}$ can follow the height (the $L^\infty$ norm)
$1/\e^{2}$ of resource.
On the other hand, in the no-resource annulus
$B^{2}_{1}\setminus B^{2}_{\e}$,
the sub-solution $\underline{u}_{\e}(x)=c_{2}e^{-1}|x|^{-2}$
for $x\in \overline{B^{2}_{1}}\setminus \overline{B^{2}_{\e}}$
implies that $u_{\e}(x)\ge \underline{u}_{\e}(x)$ for 
$x\in \overline{B^{2}_{1}}\setminus \overline{B^{2}_{\e}}$.
This fact is also a big difference from the one-dimensional case
that $u_{\e}(x)$ decays to zero in any compact set contained in
$(0,1]$ as $\e\to +0$.

In the higher dimensional case when $n\ge 3$,
under the concentration of resource as $m_{\e}=\e^{-n}\chi_{B^{n}_{\e}}$,
a large control of the diffusion rate as $d_{\e}=c_{1}/\e^{n-2}$
can make $I_{n}(d_{\e}, m_{\e})$ tend to infinity as $\e\to +0$.
In this situation as $\e\to +0$, the profile of
the sub-solution $\underline{u}_{\e}(x)$ tells us that
$u_{\e}(x)$ can follow $m_{\e}(x)$ in the resource ball $\overline{B^{n}_{\e}}$
with the same order $O(1/\e^{n})$ and
$u_{\e}(x)\ge \underline{u}_{\e}(x)=c_{2}\e^{-1}|x|^{-n}$
for the no-resource region $\overline{B^{n}_{1}}\setminus \overline{B^{n}_{\e}}$.



\medskip
Received xxxx 20xx; revised xxxx 20xx.
\medskip

\end{document}